\newtheorem{theorem}{Theorem}[section]
\newtheorem{proposition}[theorem]{Proposition}
\newtheorem{corollary}[theorem]{Corollary}
\theoremstyle{definition}
\renewcommand{\[}{\begin{equation}}
\renewcommand{\]}{\end{equation}}
\newcommand{\wegengruen}{\end{equation}}
 \newcommand{\R}{\mathbb{R}}
 \newcommand{\C}{\mathbb{C}}
 \newcommand{\N}{\mathbb{N}}
 \newcommand{\Z}{\mathbb{Z}}
 \newcommand{\D}{\mathbb{D}}
 \newcommand{\A}{\mathcal{A}}
  \newcommand{\cC}{\mathcal{C}}
    \newcommand{\cS}{\mathcal{S}}
  \newcommand{\K}{\mathcal{K}}
 \newcommand{\hH}{\mathcal{H}}
 \newcommand{\T}{\mathcal{T}}
\newcommand{\hsp}{{\hspace{-1pt}}}
\newcommand{\hs}{{\hspace{1pt}}}
\newcommand{\lra}{\longrightarrow}
\newcommand{\ra}{\rightarrow}
\newcommand{\dd}{\mathrm{d}}
\newcommand{\im}{\mathrm{i}}
\newcommand{\id}{\mathrm{id}}
\newcommand{\e}{\mathrm{e}}
\newcommand{\Mat}{\mathrm{Mat}}
\newcommand{\lN}{\ell_2(\N)}
\newcommand{\veem}{\mbox{$\bigvee\hspace{-7.2pt}{}^{\text{-}}{}^{\;2g} \hs $}}
\newcommand{\veep}{\mbox{$\bigvee\hspace{-9pt}{}^{+}{}^{\,g} \hs$}}
\def\bD{\bar\D}
\def\LD{L_2(\D)}
\def\AD{A_2(\D)}
\def\KAD{K(A_2(\D))}
\def\KlN{K(\lN)}
\def\CD{C(\bar\D)}
\def\dom{\mathrm{dom}}
\def\dim{\mathrm{dim}}
\def\spec{\mathrm{spec}}
\def\ind{\mathrm{ind}}
\def\cspan{\overline{\mathrm{span}}}
\def\ot{\otimes} 
\def\bS{\mathbb{S}^1} 
\def\bT{\mathbb{T}} 
\def\CTq{C(\mathbb{T}_{g,q})} 
\def\Cinf{\cC^\infty(\mathbb{T}_{g,q})} 
\def\CPq{C(\mathbb{P}_{g,q})}
\def\s{\sigma} 
\def\a{\alpha} 
\def\g{\gamma} 
\def\ker{\mathrm{ker}} 
\def\CS{C(\mathbb{S}^1)} 
\newcommand{\ip}[2]{\langle{#1},{#2}\rangle} 
\newcommand{\ipp}[2]{\langle\hsp\langle{#1},{#2}\rangle\hsp\rangle} 
\newcommand{\msum}[2]{\underset{#1}{\overset{#2}{\mbox{$\sum$}}}}
\title{A spectral triple for noncommutative compact surfaces}
\author{Fredy D\'iaz Garc\'ia}
\address{Instituto de F\'isica y Matem\'aticas\\
Universidad Michoacana de San Nicol\'as de Hidalgo\\ 
Cd.~Universitaria,  Edificio C-3\\
58040 Morelia, Michoac\'an\\
M\'exico} 
\email{lenonndiaz@gmail.com}
\author{Elmar Wagner}
\address{Instituto de F\'isica y Matem\'aticas\\
Universidad Michoacana de San Nicol\'as de Hidalgo\\ 
Cd.~Universitaria,  Edificio C-3\\
58040 Morelia, Michoac\'an\\
M\'exico} 
\email{elmar@ifm.umich.mx}
\subjclass[2010]{Primary 58B34; Secondary 46L87.}
\keywords{Noncommutative surfaces, Toeplitz algebra, spectral triple.}
\dedicatory{Dedicated to Paul Baum on the occasion of 
his 80th  birthday who taught me like nobody else 
that teaching difficult mathematics doesn't have to be difficult and can be fun.}
\begin{document}

\begin{abstract}
A Dirac operator is presented that will yield a $1^+$-summable regular even spectral triple 
for all noncommutative compact surfaces defined as subalgebras of the Toeplitz algebra. 
Connes' conditions for noncommutative spin geometries are analyzed and it is argued 
that the failure of some requirements is mainly due to a wrong choice of a noncommutative spin bundle. 
\end{abstract}

%%% ----------------------------------------------------------------------
\maketitle
%%% ----------------------------------------------------------------------

\section{Introduction} 
One of the most fundamental notions of  Connes' noncommutative geo\-me\-try \cite{C} is that of a spectral triple. 
An extensive amount of research has been dedicated to finding  novel examples. However, proving that 
a spectral triple satisfies all requirements of a noncommutative spin geometry can be a difficult task. 
Among the noteworthy examples are the noncommutative torus which satisfies all proposed conditions \cite{C1}, 
the isospectral deformations of toric manifolds \cite{CL}, and the standard Podle\'s sphere \cite{DS}. In the latter case,  
the original conditions had to be modified to conform with the basic principles of a noncommutative spin geometry \cite{W}.  
This seems to be a common feature for examples arising in quantum group theory. In particular, for the natural 
spectral triple on quantum SU(2)~\cite{DLSSV}, it was \textit{proven} that the requirement of a real structure 
demands a modification of original framework. A proposal for such a modification can be found in~\cite{BCDS}. 

In this paper, we present a $1^+$-summable regular even spectral triple 
for the noncommutative compact surfaces from \cite{W0} and study it in the context 
of Connes' noncommutative spin geometry \cite{GFV}. It turns out that the same Dirac operator can be used for all  
noncommutative compact surfaces including the non-orientable ones. This already indicates that these spectral triples are not very useful 
for the calculation of topological invariants (yet this could be a welcome effect: if all quantum spaces behaved like the classical ones, 
there were no need for a quantization). The reason is that the spectral triples are defined on the ``wrong'' Hilbert space, 
namely on the holomorphic functions of the Bergman space on the unit disc rather than on sections of the noncommutative spin bundle. 
Although this seems obvious, such effects could easily be disguised in more complicated examples, especially if the finiteness and the regularity 
conditions can be satisfied as in our case. Under more favorable conditions, the finiteness and the regularity 
conditions lead the correct module related to the spin bundle. Here, however, 
as a consequence of working on the ``wrong'' Hilbert space,  the spectrum of the Dirac operator 
resembles that of a 1-dimensional spectral triple and so does a possible real structure. Furthermore, the first order condition can only be satisfied 
up to compacts whereas the orientation condition and the Poincar\'e duality fail. In this sense, our paper is rather a warning that 
modifying or not fulfilling some axiomatic conditions may have substantial effects on the noncommutative spin geometry.

\section{Noncommutative compact surfaces}    \label{sec-1} 

The noncommutative compact surfaces of any genus \cite{W0} will be defined as subalgebras of the continuous functions on the quantum disc.  
As explained in \cite{KL}, the universal C*-algebra of the quantum disc can be represented by the Toeplitz algebra. For a description of the Toeplitz algebra, 
consider the open complex unit disc $\D:= \{ z\in \C: |z|<1\}$ with the standard Lebesgue measure  and let $\bD:= \{ z\in \C: |z|\leq 1\}$ denote 
its closure in $\C$. We write $\LD$ for  the Hilbert space of square-integrable functions and  $\AD$ for the closed subspace 
of holomorphic functions  on $\D$. Let $\hat P$ denote the orthogonal projection from $\LD$  onto $\AD$. 
The Toeplitz operator $\hat T_f\in B(\AD)$ with symbol $f\in \CD$ is defined by 
$$
 \hat T_f(\psi):= \hat P(f\hs \psi), \qquad \psi \in \AD \subset \LD, 
$$
and the Toeplitz algebra $\T$ is the C*-algebra generated by all $\hat T_f$ in $B(\AD)$.  
As is well known \cite{V},  the compact operators $\KlN \cong \KAD$ 
belong to $\T$ and that the quotient $\T / \KlN \cong \CS$ gives rise to 
the C*-algebra extension 
\[     \label{Cex1} 
\xymatrix{
 0\;\ar[r]&\; \KlN\;\ar[r] &\; \T \;\ar[r]^{\hat \s\ \ } &\; \CS\;\ar[r] &\ 0\, ,} 
 \]
with the symbol map $\hat \s : \T \lra \CS$ given by $\hat\s(\hat T_f) =  f\!\! \upharpoonright_{\mathbb{S}^1}$ 
for all $f\in \CD$. 

Recall that each closed surface can be constructed from a convex polygon by a suitable identification of its edges.  
Instead of edges of a polygon, we will consider arcs on the boundary of the unit disc. In this manner, the C*-algebra of continuous 
functions on a closed surface can be viewed as a subalgebra of $\CD$. Identifying points on the boundary means that the 
functions belonging to the specified subalgebra must have the same values on identified points. In correspondence with the presentation  
of oriented closed surfaces, let $g\in \N$ and define  $4g$ arcs on the circle $\bS$ by 
\begin{align*}
  a_k,\, a_k^{-1} : \,[0,1]\, \lra\, \bS,\quad  a_k(t) := \e^{\pi\im\frac{k-1+t}{2g} } , \quad a_k^{-1}(t) := \e^{\pi\im\frac{2g+ k-t}{2g} },\quad k=1,\ldots,2g. 
\end{align*}
By standard operations from algebraic topology, one can readily show that 
$$
 \bT_g:= \bar\D\,/\sim\,,\quad z\sim z\ \,\text{and}\ \,a_k(t)\sim a_k^{-1}(t) \ \, \text{for \,all} \ \, z\in\D, \ \,t\in[0,1], \  \,k=1,\ldots,2g, 
$$
is homeomorphic to a closed oriented surface of genus $g$. 

Viewing the symbol map $\hat \s :\T \ra \CS$ as the counterpart of an embedding of the circle into the quantum disc, we define
\[   \label{CTq}
\CTq := \{ f\in\T: \hat\s(f)(a_k(t))= \hat\s(f)(a_k^{-1}(t)) \ \,\text{for \,all}  \ \,t\in[0,1], \  \,k=1,\ldots,2g\}, 
\] 
see  \cite{W0}. To include the 2-sphere with genus 0, we consider additionally 
\begin{align*}
  a_0,\, a_0^{-1} : \,[0,1]\, \lra\, \bS,\quad  a_0(t) := \e^{\pi\im t } , \quad a_0^{-1}(t) := \e^{-\pi\im t } . 
\end{align*}
and define $C(\mathbb{S}^2_q):= C(\mathbb{T}_{0,q})$ as in \eqref{CTq} with $k=0$. 
Similarly, for non-orientable closed surfaces, let 
\begin{align}  \label{ab} 
  a_k,\, b_k : \,[0,1]\, \lra\, \bS,\quad  a_k(t) := \e^{\pi\im\frac{k-1+t}{g} } , \quad b_k(t) := \e^{\pi\im\frac{-k+t}{g} },\quad k=1,\ldots,g, 
\end{align}
 and set 
 \[   \label{CPq}
\CPq := \{ f\in\T: \hat\s(f)(a_k(t))= \hat  \s(f)(b_k(t)) \ \,\text{for \,all}  \ \,t\in[0,1], \  \,k=1,\ldots,g\}. 
\] 
From the continuity of the symbol map,  it follows that $\CTq$ and $\CPq$ are C*-subalgebras of $\T$. 

 An alternative, albeit less illustrative description of these C*-algebras can be given by considering $L_2(\bS)$ 
 with orthonormal  basis $\{ e_k:= \frac{1}{\sqrt{2\pi}}u^k:k\in\Z\}$, 
 where $u(\e^{\im t}) =  \e^{\im t}$ denotes the unitary generator of $C(\bS)\subset L_2(\bS)$. 
 Let $P$ denote the orthogonal projection from $L_2(\bS)$ onto  the closed subspace $\lN:= \cspan\{ e_n :n\in\N_0\}$. Then 
 the Toeplitz algebra $\T\subset B(\lN)$ is generated by the operators $T_f$, $f\in C(\bS)$, where 
 \[ \label{TfN} 
 T_f(\phi):=  P(f\hs \phi), \qquad \phi \in \cspan\{ e_n : n\in \N_0\}  \subset L_2(\bS), 
\]
 and the symbol map $\s : \T \ra C(\bS)$ reads $\s (T_f) = f$. 

 On the topological side, the C*-algebra of continuous  functions on $\bS$ satisfying the conditions on 
 $\hat\s(f)$ in \eqref{CTq} is isomorphic to $C(\underset{k=1}{\overset{2g}{\mbox{$\vee$}}} \bS )$, where 
 where $\underset{k=1}{\overset{2g}{\mbox{$\vee$}}} \bS$  denotes the topological wedge product of $n$ circles. 
Indicating the orientation of the identified arcs with a $+$ or a $-$, we set 
\begin{align*} 
C(\veem \bS ) &:= 
\{ f\in \CS :  f(a_k(t))= f(a_k^{-1}(t)) \ \,\text{for \,all}  \ \,t\in[0,1], \  \,k=1,\ldots,2g\}, \\
C(\veep \bS ) &:= 
\{ f\in \CS :  f(a_k(t))= f(b_k(t)) \ \,\text{for \,all}  \ \,t\in[0,1], \  \,k=1,\ldots,g\} . 
\end{align*} 
Then the definitions of $\CTq$  and $\CPq$ can be rewritten as 
$$
\CTq = \{ f\in\T:  \s(f) \in  C(\veem \bS ) \} ,   \quad 
\CPq = \{ f\in\T:   \s(f) \in  C(\veep \bS ) \}. 
$$

 By the short exact sequence \eqref{Cex1} and the definitions of $\CTq$ and $\CPq$, 
 these C*-al\-ge\-bras contain obviously  $\KlN=\ker(\s)$.  
 Restricting the symbol map to $\CTq$ and $\CPq$ 
 yields the C*-extensions 
\begin{equation} \label{ext2}
\begin{split}
\xymatrix{
 0\;\ar[r]&\; \KlN\;\ar[r] &\; \CTq \;\ar[r]^{\s\ \ } &\;  C(\underset{k=1}{\overset{2g}{\vee}} \bS )\;\ar[r] &\ 0\, ,} \\[-4pt] 
 \xymatrix{
  0\;\ar[r]&\; \KlN\;\ar[r] &\; \CPq \;\ar[r]^{\s\ \ } &\;  C(\underset{k=1}{\overset{g}{\vee}} \bS )\;\ar[r] &\ 0\, .}
  \end{split}
\end{equation}

The C*-algebra extensions \eqref{ext2} provide a computational tool for calculating the $K$-groups of the noncommutative compact surfaces.  
 The result can be found in \cite{W0} and is given by 
 \begin{equation} \label{K}  
 \begin{split}
 K_0(\CTq) \,\cong\, \Z\oplus \Z, \qquad   K_1(\CTq) \,\cong\, \underset{k=1}{\overset{2g}{\oplus}}\Z, \\
 K_0(\CPq)\, \cong\, \Z_2\oplus \Z, \qquad   K_1(\CPq) \,\cong\, \underset{k=1}{\overset{g-1}{\oplus}}\Z. 
   \end{split}
 \end{equation} 
 Moreover, a set of free generators for $K_0(\CTq)$ is given by the trivial projection $[1]\in K_0(\CTq)$ and 
 $[p_{e_0}] \in K_0(K(\lN)) \hookrightarrow K_0(\CTq)$, where $p_{e_0}$ denotes the 1-di\-men\-sional projection onto $\C\hs e_0$. 
 Note that the $K$-groups coincide with those of the classical counterparts but the function algebras  
 $C(\bT_g)$ and $C(\mathbb{P}_g)$ do not contain non-trivial $1{\times} 1$-projections.

\section{Differential geometry of noncommutative compact surfaces} 
 
 \subsection{Spectral triples and regularity}    \label{STR} 
 Since surfaces are even dimensional, we are looking for even spectral triples  $(\A,\hH, D, \g)$ 
 for our noncommutative compact surfaces, 
i.e., a dense *-sub\-algebra $\A$ of $\CTq$ (or $\CPq$) which is stable under holomorphic functional calculus, 
a faithful representation $\pi: \A \ra B(\hH)$, a self-adjoint operator $D$ on $\hH$ with compact resolvent and a self-adjoint 
grading operator $\g$ satisfying $\g^2= \id$, $\g D=-D\g$, $\g\pi(a) =  \pi(a)\g$  and 
$[D,\pi(a)]:= D\pi(a)-\pi(a)D\in B(\hH)$ for all $a\in \A$.

We say that  a spectral triple is $n^+$-summ\-able if $(1+ |D|)^{-(n+\epsilon)}$ yields 
a trace class operator for all $\epsilon >0$ but $(1+ |D|)^{-n}$ does not.   
In this case, one refers to the number $n\in[0,\infty)$ as the metric dimension 
in analogy to Weyl's formula for the asymptotic behavior of the eigenvalues of the Laplacian on a compact 
Riemannian manifold. 

A spectral triple  $(\A,\hH, D)$ is called regular, if $\pi(a),\, [D,\pi(a)])\in \cap_{n\in\N} \hs \dom(\delta^n_{|D|})$ 
for all $a\in \A$, where $\delta_{|D|}(x):= [|D|,x]$   and   $\dom(\delta_{|D|}) :=\{ x\in B(\hH) : \overline{\delta_{|D|}(x)}\in  B(\hH) \}$.  
Let $D=F\hs|D|$ denote the polar decomposition of $D$. 
For regular even spectral triples, one can show that $F=  \begin{pmatrix} 0 & F_{+-} \\ F_{-+}   & 0 \end{pmatrix}$ 
provides an even Fredholm module for $\A$ and one defines $\ind(D):= \ind( F_{+-} )$. 
This Fredholm module is called the fundamental class of $D$  and we say that the  fundamental class is non-trivial 
if it gives rise to non-trivial index pairings. 

Before turning our attention to spectral triples for $\CTq$, we will describe more explicitly the action of $\T\subset B(\lN)$ 
on $\lN$. As in the previous section,  we use the orthonormal  basis $\{ e_k= \frac{1}{\sqrt{2\pi}}u^k:k\in\Z\}$ 
of $L_2(\bS)$.  Then $T_{u^n}=S^n$ and $T_{u^{-n}}=S^{*n}$, where $S\in B(\lN)$ denotes the shift operator given by 
$$
Se_k= e_{k+1}, \qquad k\in\N_0. 
$$
Expanding $f\in \CS\subset L_2(\bS)$ in its Fourier series  $f= \sum_{k\in \Z} f_k u^k$, 
$f_k= \frac{1}{\sqrt{2\pi}} \ip{e_k}{f}$, and setting 
$$
S^{\# k} := S^k, \quad k\geq 0, \qquad S^{\# k} := S^{*|k|}, \quad k<0,
$$
we can write 
\[ \label{Tf} 
T_f= \msum{k\in\Z}{} f_k S^{\# k}, 
\] 
which means that $T_f e_n = \sum_{k\in\N_0} f_{k-n} e_k$. 
Identifying $C^{(m)}(\bS)$ with the space of $m$-times continuously differentiable $2\pi$-periodic functions on $\R$, 
one shows by partial integration that 
\[  \label{Tf'} 
T_{f'}= \im\msum{k\in\Z}{} kf_k S^{\# k}, \qquad f\in C^{(1)}(\bS), 
\] 
where $f' = \frac{\dd}{\dd t} f$ for $t\in [0,2\pi]$. In particular, $f= \sum_{k\in \Z} f_k u^k\in C^{\infty}(\bS)$ 
implies that $\{f_k\}_{k\in\Z}$ is a sequence of rapid decay.

The condition that $\A$ should be stable under holomorphic functional calculus is often ignored 
because the proof that a chosen subalgebra has this property might be somewhat involved. 
Our choice is presented in the next proposition. The proof follows the arguments of \cite[Proposition 1]{C2}.  
\begin{proposition}  \label{P1}
Let $\K_S\subset \KlN$ denote the ideal of matrices of rapid decay, i.e., operators $A$ given by 
$A e_n = \sum_{k\in\N_0} a_{kn}e_k$ such that$\underset{(k,n)\ra \infty}{\lim}\! \! |k^\alpha  a_{kn} n^\beta| =0$ 
for all $\alpha, \beta \in\N_0$. 
Set $C^\infty(\veem \bS ) := C^\infty(\bS) \cap C(\veem\bS ) $ 
and let  $\A$ be the *-algebra generated by the elements of $\K_S$ and 
$\Cinf:=\{ T_f\in \CTq : f\in C^\infty(\veem \bS )\}$.  
Then $\A$ is dense in $\CTq$ and stable under holomorphic functional calculus. 
\end{proposition}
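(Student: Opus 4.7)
My plan is a three-step approximation using the C*-extension \eqref{Cex1}. Given $a\in\CTq$ and $\varepsilon>0$, I would first approximate the symbol $\s(a)\in C(\veem\bS)$ in sup norm by some $g\in C^\infty(\veem\bS)$ (e.g.\ by mollifying on $\bS$ with a kernel compatible with the arc identifications), then lift to $T_g\in\Cinf\subset\A$; then use the quotient-norm identity $\|a-T_g\|_{\CTq/\KlN}=\|\s(a)-g\|_\infty$ to produce $c\in\KlN$ with $\|a-T_g-c\|$ arbitrarily small; and finally approximate $c$ in operator norm by a finite-rank matrix supported on a finite block of indices, which automatically lies in $\K_S$.

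\textbf{Stability.} Following the strategy of \cite{C2}, I would first use the identity $T_fT_h - T_{fh}\in\K_S$ for $f,h\in C^\infty(\bS)$ (the Hankel correction has smooth symbol, hence rapid-decay matrix entries) together with $\Cinf\cdot\K_S,\,\K_S\cdot\Cinf\subset\K_S$ to verify that $\A=\Cinf+\K_S$ is already a *-algebra and fits into a short exact sequence $0\to\K_S\to\A\xrightarrow{\s}C^\infty(\veem\bS)\to 0$ sitting inside \eqref{ext2}. Two independent classical facts then enter: that $C^\infty(\veem\bS)\subset C(\veem\bS)$ is stable under holomorphic functional calculus (a non-vanishing smooth function on the wedge has a smooth reciprocal), and that $\K_S\subset\KlN$ is stable under holomorphic functional calculus (it is a Schwartz-type algebra for the number operator $Ne_n=ne_n$ on $\lN$; stability follows by topologising $\K_S$ with the Fréchet seminorms $\sup_{i,j}|i^\alpha k_{ij} j^\beta|$ and checking that the Neumann series for $(1+k)^{-1}$ converges in this finer topology).

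Given these, the stability of $\A$ in $\CTq$ runs as follows. If $a\in\A$ is invertible in $\CTq^\sim$, then $f:=\s(a)$ is invertible in $C(\veem\bS)^\sim$ and hence $f^{-1}\in C^\infty(\veem\bS)^\sim$ by the first stability result. The Toeplitz lift $b:=T_{f^{-1}}\in\A$ satisfies $ab = 1+c$ with $c\in\K_S$, and if $1+c$ is invertible in $\KlN^\sim$ then the second stability result gives $(1+c)^{-1}\in 1+\K_S$ and $a^{-1}=b(1+c)^{-1}\in\A$. \textbf{The main obstacle} is that $b=T_{f^{-1}}$ is only Fredholm of index zero---the invertibility of $a$, viewed as a compact perturbation of $T_f$, forces the winding number of $f$ to vanish---so $b$ may have nontrivial finite-dimensional kernel, in which case $1+c = ab$ is not invertible. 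Circumventing this requires modifying $b$ by a finite-rank correction $k_0\in\K_S$ mapping $\ker b$ isomorphically onto $\mathrm{coker}\,b$, producing an invertible lift $\tilde b = b+k_0\in\A$ for which the above argument applies; this in turn invokes the standard regularity fact that $\ker T_g$ and $\mathrm{coker}\,T_g$ for $g\in C^\infty(\bS)$ non-vanishing are spanned by rapidly-decreasing sequences, a consequence of Wiener--Hopf factorisation for smooth symbols.
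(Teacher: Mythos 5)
Your proposal is correct, and its overall architecture coincides with the paper's: for density, approximate the symbol uniformly by an element of $C^\infty(\veem \bS )$ and the compact remainder by a rapid-decay matrix; for spectral invariance, reduce to the extension $0\to\K_S\to\A\to C^\infty(\veem \bS )\to 0$ with both ends stable under holomorphic functional calculus. Where you genuinely diverge is the final inversion step. The paper writes $a^{-1}=a^{-1}\bigl(1-aT_{\s(a^{-1})}\bigr)+T_{\s(a^{-1})}$ and concludes by asserting that $\K_S$ is a two-sided ideal in $\CTq$, so that the first summand lies in $\K_S$; you instead invert $ab=1+c$ inside the unitization of $\K_S$ using its spectral invariance in $\KlN$. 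Your route needs only that $\K_S$ is an ideal in $\A$ together with its Fr\'echet spectral invariance, whereas the paper's shortcut leans on the much stronger ideal property of $\K_S$ inside the whole C*-algebra $\CTq$ (left multiplication by an arbitrary element of $\CTq$ need not preserve rapid decay in the row index); in that respect your argument is the more robust one and is closer to the template of \cite{C2}. One simplification you missed: the ``main obstacle'' you flag does not actually occur. Since $a$ is invertible and $a-T_{\s(a)}$ is compact, $T_{\s(a)}$ is Fredholm of index $0$, so $\s(a)$ has winding number zero; and a Toeplitz operator with continuous, non-vanishing symbol of winding number zero is already invertible (index $0$ combined with Coburn's lemma, which forces one of $\ker T_g$, $\ker T_g^*$ to vanish). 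Hence $b=T_{\s(a)^{-1}}$ is invertible, $1+c=ab$ is invertible, and the finite-rank correction together with the Wiener--Hopf regularity of kernels and cokernels can be dispensed with entirely.
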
 
\begin{proof} 
Note that $\|T_f-T_g\|\leq \|P\|\,\|f-g\|_\infty\leq \|f-g\|_\infty$ and $x = x - T_{\s(x)}  + T_{\s(x)}$ with $x - T_{\s(x)} \in \KlN$. 
Since each $g\in C(\veem\bS )$ can be uniformly approximated by a by functions from $C^\infty(\veem\bS )$, 
and each compact operator can be approximated by matrices of rapid decay, $\A$ is dense in $\CTq$.  

An elementary calculation shows that 
 \begin{align} 
\begin{split} \label{Srel} 
S^{\#(m+k)} - S^{\#m} S^{\#k}  &=  (1- S^mS^{*m}) S^{*(|k|-m)}    (1- S^{|k|} S^{*|k|}),     
 \ \ k\hsp<\hsp 0,\ 0\hsp<\hsp m\hsp <\hsp |k|, \\
S^{\#(m+k)} - S^{\#m} S^{\#k}  &=  (1- S^mS^{*m}) S^{(m-|k|)}    (1- S^{|k|} S^{*|k|}),      \ \ k\hsp<\hsp0,\ m\hsp\geq \hsp |k|, 
\end{split} 
\end{align} 
and 0 otherwise, where the operators in \eqref{Srel} are finite matrices. 
Let $f,g\in C^\infty(\veem \bS )$. Using the representation \eqref{Tf} together with Equation \eqref{Srel}
and the fact that the coefficients are sequences of rapid decay,  
one proves that 
\[  \label{Ks}
T_{fg} - T_f T_g= \msum{m,k\in\Z}{}  f_m g_k  (S^{\#m+k}  - S^{\#m} S^{\#k} )\in \K_S.
\]
Thus $[T_f,T_g]\in\K_S$, so 
the algebra generated by $\Cinf$ is commutative modulo the ideal $\K_S$. 
Since $\K_S$ is an two-sided ideal in $\A$, we get the exact sequence 
$$    
\xymatrix{
 0\;\ar[r]&\; \K_S \;\ar[r] &\; \A \;\ar[r]^{ \s\qquad } &\; C^\infty(\veem \bS )\;\ar[r] &\ 0\, .}  
 $$
Moreover, $\K_S$ and $C^\infty(\veem \bS )$ with the usual (semi-)norms for sequences of rapid decay and $C^\infty$-functions, 
respectively, can be turned into Fr\'echet algebras which are stable under holomorphic functional calculus. To prove the proposition, 
it thus suffices to show that,  for all $a\in \A$ such that $a^{-1} \in \CTq$, one has $a^{-1} \in\A$. 

Now, if $a\in \A$ such that $a^{-1} \in \CTq$, then clearly $\s(a^{-1}) = \s(a)^{-1} \in C^\infty(\veem \bS )$. 
Furthermore, $1- aT_{\s(a^{-1})}  \in \A$ and $\s(1 - aT_{\s(a^{-1})}) =0$, hence $1- aT_{\s(a^{-1})}  \in \K_S$. 
As $\K_S$ is a two-sided ideal in $\CTq$, we obtain $a^{-1} = a^{-1} (1 - aT_{\s(a^{-1})}) + T_{\s(a^{-1})} \in \A$.  
This finishes the proof. 
\end{proof} 

As also in the classical case not all continuous functions are differentiable, the last proposition provides us with a preferred choice 
of a dense *-subalgebra of  $\CTq$ for the construction of a spectral triple. In \cite{WD}, a spectral triple for the 
noncommutative torus $C(\mathbb{T}_{1,q})$ 
was obtained by starting with the action of the first order differential operator $\frac{\partial}{\partial z}$ 
on the Bergman space $\AD$. It turns out that essentially the same Dirac operator works for all the 
noncommutative compact surfaces from Section \ref{sec-1}. In the following, we will first give the definition of 
these spectral triples and then prove their fundamental properties in a theorem. 

The simplest choice of a $\Z_2$-graded Hilbert space with a faithful representation of $\CTq\hsp\subset \hsp B(\lN)$ is 
$\hH\hsp:=\hsp \lN\oplus\lN$. Then the assignment  
$\pi : \A \ra B(\lN\oplus\lN)$,  $\pi(a)(v_+\oplus v_-):= av_+\oplus av_-$, evidently defines a faithful *-re\-pre\-sen\-ta\-tion 
for all *-sub\-al\-ge\-bras $\A\subset B(\lN)$.  The representation commutes with  obvious grading operator $\g$ given by 
$\g(v_+\oplus v_-)= v_+\oplus (-v_-)$. 
Consider now  the self-adjoint  number operator $N$ defined by 
 \[   \label{N}
 N e_n = n e_n, \quad \dom(N) = \Big\{ \msum{n\in\N_0}{} \a_n e_n\in\lN:  \msum{n\in\N_0}{} n^2 |\a_n|^2<\infty\Big\} , 
 \] 
 and let 
 \[    \label{D}
 D:=  \begin{pmatrix} 0 & \!\! S^* N \\ NS  & 0 \end{pmatrix}=  \begin{pmatrix} 0 &  \!\!  S^* N \\ S (N+1)  & 0 \end{pmatrix}, 
 \quad \dom(D) = \dom(N) \oplus \dom(N) \subset \hH. 
 \]
Since $ (S^*N)^*=NS = S (N+1)$, $D$ is self-adjoint. Defining $F$ and $| D |$ by the polar decomposition $D= F\hs |D|$, we get 
from \eqref{D} 
  \[   \label{FD}
 F=  \begin{pmatrix} 0 & S^* \\ S   & 0 \end{pmatrix} , \qquad | D |=  \begin{pmatrix} N+1 & 0 \\ 0   &  N\end{pmatrix} . 
 \]  
 Clearly, $\g D=-D\g$.  
 
 \begin{theorem}  \label{TST}
 Let $\A \subset \CTq$ denote the pre-C*-algebra from Proposition \ref{P1}, and let $\hH$, $D$, $\g$ and the 
 representation $\pi:\A\ra B(\hH)$ be given as in the previous paragraph.  
 Then $(\A,\hH,D,\g)$ yields a $1^+$-summable regular even spectral triple for $\CTq$. The Dirac operator $D$ has 
 discrete spectrum $\spec(D) = \Z$, each eigenvalue $k\in \Z$ has multiplicity 1 and a complete orthonormal basis 
 of associated eigenvectors is given by 
 \[   \label{bk} 
b_k:= \mbox{$\frac{1}{\sqrt{2}}$}(e_{k-1} \oplus e_k), \quad 
b_{-k}:= \mbox{$\frac{1}{\sqrt{2}}$}(- e_{k-1} \oplus e_k), \quad  k>0, \qquad b_0:= 0\oplus e_0. 
\]
 Moreover, the fundamental class of $D$  is non-trivial.  % in the sense that  $\ind(D) =\ind(S^*)= 1$. 
 \end{theorem}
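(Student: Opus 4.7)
My plan is to split the theorem into four independent pieces: (i) the spectral description of $D$ (which immediately gives self-adjointness, compact resolvent and $1^+$-summability), (ii) bounded commutators $[D,\pi(a)]$, (iii) regularity, and (iv) non-triviality of the fundamental class. The structural input I will lean on is the pair of commutation rules $NS=S(N+1)$ and $NS^{\hs*}=S^{\hs*}(N-1)$, together with the rapid decay of Fourier coefficients of $C^\infty$ functions and of matrix entries of elements of $\K_S$.

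I would first verify the formulas \eqref{bk} by direct substitution: using $S^*Ne_k=ke_{k-1}$ and $NSe_{k-1}=ke_k$, one gets $Db_{\pm k}=\pm k\, b_{\pm k}$ and $Db_0=0$. Orthonormality of $\{b_k\}_{k\in\Z}$ and totality (solving for $e_{k-1}\oplus 0$ and $0\oplus e_k$ in terms of $b_{\pm k}$, and observing $b_0=0\oplus e_0$) give the complete orthonormal basis, so $\spec(D)=\Z$ with simple eigenvalues. Since $|D|$ has eigenvalue $n+1$ on the first summand and $n$ on the second, each with multiplicity one, $\mathrm{Tr}\,(1+|D|)^{-(1+\epsilon)}=1+2\sum_{n\geq 1}(n+1)^{-(1+\epsilon)}$ is finite for every $\epsilon>0$ and diverges at $\epsilon=0$; this gives $1^+$-summability (and compact resolvent, and self-adjointness of $D$). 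The grading conditions $\gamma^2=\id$, $\gamma D=-D\gamma$, $\gamma\pi(a)=\pi(a)\gamma$ are immediate from the block form.

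The heart of the theorem is the bounded commutator condition. Since $\pi(a)$ is diagonal, $[D,\pi(a)]=\left(\begin{smallmatrix}0&[S^{\hs*}N,a]\\ [NS,a]&0\end{smallmatrix}\right)$. For $a=T_f$ with $f\in C^\infty(\veem\bS)$, I would plug in the expansion \eqref{Tf}: using $[N,S^{\#k}]=kS^{\#k}$ and the above commutation rules, a direct computation gives $[S^{\hs*}N,S^{\#k}]=k\,S^{\#(k-1)}$ and analogously $[NS,S^{\#k}]=k\,S^{\#(k+1)}$ (here the identity $S^kS^{\hs*}N=S^{k-1}N$, valid because $p_{e_0}N=0$, is used to eliminate the boundary term). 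Summing against $f_k$ and using \eqref{Tf'} expresses these as (essentially) Toeplitz operators built from $f'$, hence bounded by the rapid decay of $\{kf_k\}_{k\in\Z}$. For $a\in\K_S$ the computation is even simpler: $Da$ and $aD$ have rapidly decaying matrix coefficients (one entry index gets multiplied by a polynomial factor, absorbed by rapid decay), so the commutator lies in $\K_S$ and is bounded. Extending by the Leibniz rule to the *-algebra generated by both gives bounded commutators for all of $\A$.

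Regularity follows by the same mechanism, iterated: $\delta_{|D|}=[|D|,\,\cdot\,]$ acts on $S^{\#k}$ by multiplication by $k$ (up to a compact boundary correction from $p_{e_0}$), so $\delta_{|D|}^n(T_f)$ corresponds to differentiating $f$ a further $n$ times, which stays bounded since $f\in C^\infty(\veem\bS)$; on $\K_S$ repeated commutation with the diagonal operator $|D|$ preserves rapid decay. This closure under all $\delta_{|D|}^n$ covers both $\pi(a)$ and $[D,\pi(a)]$. Finally, for the fundamental class, the polar decomposition \eqref{FD} shows $F^2-\id=\left(\begin{smallmatrix}0&0\\0&-p_{e_0}\end{smallmatrix}\right)$ is of rank one, hence compact, and the commutator computation in the previous paragraph shows $[F,\pi(a)]$ is compact for every $a\in\A$ (for $T_f$, $[S,T_f]=T_uT_f-T_fT_u$ lies in $\K_S$ by \eqref{Ks}). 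Thus $F$ is an even Fredholm module, and its pairing with $[1]\in K_0(\CTq)$ is $\ind(F_{+-})=\ind(S^{\hs*})=\dim\ker S^{\hs*}-\dim\ker S=1$, so the fundamental class is non-trivial.

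The step I expect to require the most bookkeeping is the bounded-commutator verification for $T_f$: keeping track of the small boundary terms involving $p_{e_0}$ that arise from $SS^{\hs*}=\id-p_{e_0}$ is what makes the cancellations $[S^{\hs*}N,S^{\#k}]=k\,S^{\#(k-1)}$ work cleanly, and getting these cancellations right is what lets the rapid decay of $\{f_k\}$ translate into boundedness (and later into regularity) rather than only into compactness of the commutators.
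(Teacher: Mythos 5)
Your proposal is correct and follows essentially the same route as the paper: the same splitting into the generators $\K_S$ and $\Cinf$ with the Leibniz rule, the identification of $[S^*N,T_f]$ and $[NS,T_f]$ with Toeplitz operators built from $f'$ (you compute $[S^*N,S^{\#k}]=kS^{\#(k-1)}$ on the generators $S^{\#k}$ where the paper evaluates directly on basis vectors, which is the same calculation), the reduction of regularity to iterated commutators with $N$ giving $\delta_N^n(T_f)=(-\im)^nT_{f^{(n)}}$, and the index $\ind(S^*)=1$ for the fundamental class. The only cosmetic differences are that you verify the Fredholm-module properties of $F$ by hand where the paper invokes the general fact for regular even spectral triples, and your remark about a ``compact boundary correction'' in $[N,S^{\#k}]=kS^{\#k}$ is unnecessary since that identity is exact.
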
 
 
 \begin{proof} 
 It was already discussed above  that $\g$, $D$ and $\pi(a)$, $a\in\A$, satisfy the commutation relations of a $\Z_2$-graded  spectral triple. 
 To prove that commutators the $[D,\pi(a)]$ are bounded, it suffices to consider the generators from $\K_S$ and $\Cinf$ since, 
 by the Leibniz rule for commutators, $[D,\pi(ab)]= [D,\pi(a)]\pi(b) + \pi(a)[D,\pi(b)]$. 
 
 If the operator $A \in\K_S$ is given by a matrix of rapid decay  $(a_{kn})_{k,n\in\N_0}$, then 
 the matrices $(n a_{kn})_{k,n\in\N_0}  $ and  $(k a_{kn})_{k,n\in\N_0} $ corresponding to $NA$ and $AN$, respectively, are again matrices 
 of rapid decay and thus define bounded operators  belonging to $\K_S$. Using the relations $S^*N = (N+1)S^*$ and 
 $S(N+1)=NS$, we get 
 %SN=(N-1)S= NS -S   NS = SN + S  S* N = NS* +S*
 \[   \label{DpA} 
 [D,\pi(A)]  =  \begin{pmatrix} 0 & S^*N  A - A N S^* - AS^*\\ SNA-A N S +SA & 0 \end{pmatrix} 
 \]
which  is bounded because all the operators $A$, $S$, $S^*$, $AN$ and $NA$ belong to $B(\lN)$.  
We can conclude even more, namely that the entries of $[D,A]$ belong to $\K_S$ since, as easily seen, 
the product of a matrix of rapid decay with the shift operator or its adjoint yields again a matrix of rapid decay. 

Next, let $f\in C^\infty(\veem \bS )$ so that $T_f \in \Cinf$. Then, by Equations \eqref{Tf} and \eqref{Tf'}, 
\begin{align}  \nonumber
[S^*N, T_f] e_n &=  \msum{k\in\N_0}{}  k f_{k-n} e_{k-1}  -  \msum{k\in\N_0}{} n f_{k-n+1} e_{k} 
=  \msum{k\in\N_0}{} (k-n+1) f_{k-n+1 } e_{k} \\
&= \msum{k\in\Z}{} kf_k S^{\# k+1} e_n = -\im T_{\bar u f'}e_n,  \label{SNTf} 
\end{align}
hence 
\[   \label{SNT} 
[S^*N, T_f]= -\im T_{\bar u f'} \in \T, \qquad [NS, T_f] = (- [ S^*N, T_{\bar f}])^* =   -\im T_{u f'}\in \T. 
\] 
Therefore $[ D, \pi(T_f)] \in B(\hH)$ since the entries of this $2\times 2$-matrix belong to the Toeplitz algebra 
$\T\subset B(\lN)$. 
This finishes the proof that $[D,\pi(a)]\in B(\hH)$ for  all $a\in\A$. 

Furthermore, one readily verifies that the vectors $b_k$, $k\in \Z$, in Equation \eqref{bk} form a basis of eigenvectors for $\hH$ 
and that $Db_k=kb_k$. Since each eigenvalue $k\in\Z=\spec(D)$  has multiplicity 1,  the resolvent $(D+\im)^{-1}$ is compact and  
the spectral triple is $1^+$-summable. 

To prove the regularity of the spectral triple, first note that 
$$
[|D|, A] = \begin{pmatrix} [ N , A_{11}] &\!\!\!\! [ N , A_{12}] + A_{12}\\ [ N , A_{21}] - A_{21} & [ N , A_{22}] \end{pmatrix}, \ \ 
A=  \begin{pmatrix} A_{11} &\!\!\!\!   A_{12}\\  A_{21} & A_{22} \end{pmatrix}\in B(\lN \oplus \lN). 
$$
Hence it suffice to show that all elements $a\in \A$ and all entries of $[D,\pi(a)]$ 
belong to $\cap_{n\in\N} \hs \dom(\delta^n_{N})$, where $\delta_{N}(x):= [N,x]$ for $x\in B(\lN)$. 

If $A\in\K_S$ is given by a rapid decay matrix, then $N^n A \in \K_s$ and  $A N^n  \in \K_s$ for all $n\in\N$ by the 
very definition of a rapid decay matrix. This implies $\K_s\subset \cap_{n\in\N} \hs \dom(\delta^n_{N})$. 
As concluded below Equation \eqref{DpA}, all entries of $[D,\pi(A)]$ belong again to $\K_S$. 
Therefore we have $\pi(A), \,[D,\pi(A)] \in \cap_{n\in\N} \hs \dom(\delta^n_{|D|})$ for all $A\in\K_S$. 

Let now $f\in C^\infty(\bS)$. As in \eqref{SNTf}, we compute $[N,T_f] e_n = \msum{k\in\N_0}{} (k-n) f_{k-n } e_{k}$, 
thus 
\[    \label{NnTf}
 \delta_{N}^n(T_f) = (-\im)^n T_{f^{(n)}} 
\]
by Equation \eqref{Tf'}. Consequently,  $T_f \in \cap_{n\in\N} \hs \dom(\delta^n_{N})$. % for all $f\in C^\infty(\bS)$.  
As $ u f'$ and $\bar u f'$  also belong to $C^\infty(\bS)$, we conclude from  the definition of $\Cinf$ 
together with  Equations  \eqref{SNT}  and \eqref{NnTf} 
that $\pi(T_f) ,  \,[D,\pi(T_f)] \in \cap_{n\in\N} \hs \dom(\delta^n_{|D|})$ for all $T_f \in \Cinf$. 
Since the elements from $\K_S$ and $\Cinf$ generate $\A$, the regularity follows from the 
Leibniz rule for commutators. 

Having a regular, even spectral triple, we know that its fundamental class $F$ from \eqref{FD} defines an even Fredholm module. 
Since $\ind(D)=\ind(S^*)=1\neq 0$, the  fundamental class is non-trivial. 
\end{proof} 
  
 The restriction to $\CTq$ was made  only for notational convenience. Clearly, we have $\K_S\subset \CPq$ without any modification. 
  Analyzing the proofs of Proposition \ref{P1} and Theorem \ref{TST}, one easily realizes that only the differentiability of 
  the function $f$ in the definition of $T_f\in \Cinf$ was used. So, if we define 
  \[
   C^\infty(\mathbb{P}_{g,q}) :=\{ T_f\in \CPq : f\in C(\veep \bS )\cup C^\infty(\bS) \}, 
  \] 
 then all arguments in the proofs remain valid. Therefore we can state the analogous results of Theorem \ref{TST} for $\CPq$. 
  
  \begin{corollary}
  Let $\A\subset \CPq$ denote  the *-algebra generated by the elements of $\K_S$ and $C^\infty(\mathbb{P}_{g,q})$. 
  With the same definitions of $\hH$, $D$, $\g$  and $\pi:\A\ra B(\hH)$ as in Theo\-rem~\ref{TST}, 
  $(\A, D,\hH, \g)$ yields a $1^+$-summable regular even spectral triple for $\CPq$.  
  The spectrum, the eigenvalues and the orthonormal basis of eigenvectors from 
  Theorem~\ref{TST} remain unchanged for the Dirac operator $D$ and its fundamental class is non-trivial. 
  \end{corollary}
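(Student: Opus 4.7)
The strategy is to copy the proofs of Proposition \ref{P1} and Theorem \ref{TST} line by line, verifying that every algebraic step which invoked the identification $a_k\sim a_k^{-1}$ used only the fact that this identification defines a closed *-subalgebra of $C(\bS)$ in which smooth functions are uniformly dense. The identification $a_k(t)\sim b_k(t)$ defining $C(\veep \bS)$ shares these two properties, so every intermediate computation transfers with $\veem$ replaced by $\veep$.

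For density of $\A$ in $\CPq$ and stability under holomorphic functional calculus, I would rerun the argument of Proposition \ref{P1}: the norm bound $\|T_f-T_g\|\le \|f-g\|_\infty$ combined with uniform density of smooth $\veep$-identified functions in $C(\veep \bS)$ (obtained by mollifying on the quotient space and lifting the result) yields density of $\A$ in $\CPq$. Equations \eqref{Srel} and \eqref{Ks} give $T_{fg}-T_fT_g\in \K_S$ without reference to the identification, so $\A$ fits into a short exact sequence $0\to \K_S \to \A \to C^\infty(\veep \bS)\to 0$ of Fr\'echet algebras each of which is individually stable under holomorphic functional calculus. The identity $a^{-1}=a^{-1}(1-aT_{\s(a^{-1})})+T_{\s(a^{-1})}$, using that $\K_S$ is a two-sided ideal in $\CPq$, then closes the argument exactly as before.

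For the spectral triple claim, $\hH$, $D$, $\g$, the polar decomposition \eqref{FD}, the spectrum $\spec(D)=\Z$, the eigenbasis \eqref{bk}, the $1^+$-summability, and the index $\ind(D)=\ind(S^*)=1$ all depend only on $D$ and therefore coincide with the outputs of Theorem \ref{TST}. What remains is the boundedness of $[D,\pi(a)]$ and the inclusion of $\pi(a)$ and $[D,\pi(a)]$ in $\cap_n \dom(\delta_N^n)$ for the generators of $\A$. For $A\in\K_S$ this repeats the computation below \eqref{DpA} verbatim, using only closure of rapid-decay matrices under multiplication by $S$, $S^*$, $N$. For a Toeplitz operator $T_f$ with smooth $f$, the identities $[S^*N,T_f]=-\im T_{\bar u f'}$ and $[NS,T_f]=-\im T_{uf'}$ from \eqref{SNT}, together with $\delta_N^n(T_f)=(-\im)^n T_{f^{(n)}}$ from \eqref{NnTf}, use only smoothness of $f$ and of $uf'$, $\bar u f'$, hence apply unchanged. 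Regularity for all of $\A$ then follows from the Leibniz rule for commutators, and non-triviality of the fundamental class is immediate from $\ind(S^*)=1$.

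The only place in the argument that requires genuinely new verification in the non-orientable setting is the uniform density of smooth $\veep$-identified symbols in $C(\veep \bS)$: the orientation-reversing gluing $a_k(t)\sim b_k(t)$ reverses the parameter direction relative to the $\veem$-case, so one must arrange an approximation scheme equivariant under this flip. This is the main (and essentially only) obstacle; it is mild and is handled by mollifying on the quotient, which is a finite wedge of circles, and lifting to $\bS$. Every other step is an immediate paraphrase of the corresponding line in Proposition \ref{P1} and Theorem \ref{TST}.
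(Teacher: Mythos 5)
Your proposal is correct and follows essentially the same route as the paper, which simply observes that the proofs of Proposition \ref{P1} and Theorem \ref{TST} use only the differentiability of the symbol $f$ (never the specific boundary identification) and hence transfer verbatim once $\veem$ is replaced by $\veep$. The one point you single out for extra care --- uniform density of smooth $\veep$-identified symbols in $C(\veep\bS)$, handled by approximating on the wedge of circles and lifting --- is indeed the only step depending on the gluing, and the paper passes over it without comment in both the orientable and non-orientable cases.
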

  
  At the first glance, it might be surprising that there exist Dirac operators on noncommutative versions of a non-orientable manifolds. 
  However, the Dirac operator of a spectral triple should rather be viewed as an analog of an elliptic first order differential operator 
  and not necessarily as the Dirac operator on a spin manifold. Nevertheless, since the Dirac operator is related to topological invariants 
  via the index theorem, it is a strange effect of noncommutative geometry that the same operator can be used for 
  different noncommutative spaces among which some have $K$-groups with torsion. 
  
 \subsection{Real structure and first order condition}    \label{rs} 
 
 In the context of spin geometry, a real structure singles out those manifolds that admit a real spin structure. 
 The results below will show that our spectral triples cannot be equipped with a real structure in the exact sense.  
 Since this result would not be surprising for a quantum space 
 for which the classical counterpart does not 
 admit a real spin structure, we restrict ourselves in this section to the quantized orientable 
 surfaces $\CTq$. 
 
A real structure for a spectral triple $(\A, \hH, D)$ is given by an anti-unitary operator $J$ satisfying 
$J^2 = \pm\hs \id$ and $JD=\pm DJ$. For an even spectral triple  $(\A, \hH, D,\g)$, 
one requires additionally $J\g = \pm \g J$. The signs depend on the dimension of the underlying 
quantum space. For instance, 
\[    \label{J2}
J^2 = -\hs \id, \qquad JD=  DJ, \qquad J\g =  - \g J
\] 
in dimension 2, and 
\[    \label{J1}
J^2 =  \id,  \qquad JD= - DJ 
\] 
in dimension 1. 

Given a real structure $J$, one says that the spectral triple satisfies the first order condition if for all $a,b\in\A$
\[  \label{J0} 
[\pi(a), J\pi(b)J^{-1}] = 0,  \quad [[D,\pi(a)], J\pi(b)J^{-1}] =0\quad \text{for \,all}\ \,a,b\in\A.
\]
It was observed in \cite{DLPS} and \cite{DLSSV} that, under certain circumstances, a real structure might not exist 
for quantized real spin manifolds and it was proposed to modify the first order condition \eqref{J0} by requiring only 
\[  \label{JK} 
[\pi(a), J\pi(b)J^{-1}] \in \K_S(\hH) ,  \quad [[D,\pi(a)], J\pi(b)J^{-1}] \in \K_S(\hH)  \quad \text{for \,all}\ \,a,b\in\A, 
\]
where $\K_S(\hH)  \subset B(\hH)$ denotes the ideal of matrices of rapid decay 
(associated to an orthonormal basis). Here we assume that $\hH$ is separable. 
In the context of noncommutative geometry, the matrices of  rapid decay  are considered as infinitesimals of arbitrary 
high order. 

The first result of this section shows that the spectral triples from Theorem \ref{TST} do not admit a real 
structure in the exact sense.  
 
\begin{proposition}  \label{PJ}
Let $(\A, \hH, D,\g)$ denote the spectral triple described in Theorem \ref{TST}. 
Then there does not exist an anti-unitary operator $J$ on $\hH$ satisfying \eqref{J2}. 
\end{proposition}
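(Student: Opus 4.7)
The plan is to derive a direct contradiction from the spectral simplicity of $D$ together with the required relations $J^2 = -\mathrm{id}$ and $JD = DJ$; the grading condition $J\g = -\g J$ will not even need to be invoked.

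First I would recall from Theorem \ref{TST} that $\spec(D) = \Z$, that each eigenvalue $k \in \Z$ has multiplicity $1$, and that a complete orthonormal basis of eigenvectors is given by the vectors $b_k$ of \eqref{bk}. Since any anti-unitary operator is bounded and in particular preserves domains of self-adjoint operators in the obvious sense, the relation $JD = DJ$ (holding on $\dom(D)$) implies that $J$ maps each eigenspace $\ker(D - k\hs\id)$ into itself.

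Next, the fact that each of these eigenspaces is one-dimensional forces $Jb_k = \lambda_k b_k$ for some scalar $\lambda_k \in \C$, and the anti-unitarity of $J$ gives $|\lambda_k| = \|Jb_k\| = \|b_k\| = 1$. Using that $J$ is antilinear, I would then compute
\[
J^2 b_k \,=\, J(\lambda_k b_k) \,=\, \bar\lambda_k\hs J b_k \,=\, \bar\lambda_k \lambda_k b_k \,=\, b_k
\]
for every $k \in \Z$. Since $\{b_k\}_{k\in\Z}$ is an orthonormal basis of $\hH$, this yields $J^2 = \id$, directly contradicting the requirement $J^2 = -\hs\id$ in \eqref{J2}.

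There is essentially no obstacle here: the entire argument hinges on the already-established simple spectrum of $D$, which forces $J$ to act as a unimodular scalar on each eigenvector, and antilinearity then automatically produces $J^2 = +\id$. The only thing to be mildly careful about is the antilinear pull-through of $\lambda_k$ in the computation of $J^2 b_k$; everything else is immediate from Theorem \ref{TST}. In particular, the obstruction is purely spectral, reflecting that $\spec(D)$ behaves like that of a $1$-dimensional spectral triple rather than a $2$-dimensional one, in line with the discussion in the introduction.
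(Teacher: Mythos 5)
Your proof is correct and follows essentially the same route as the paper: the relation $JD=DJ$ together with the simple spectrum of $D$ forces $J$ to act by a unimodular scalar on each eigenvector, and antilinearity then yields $J^2=\id$, contradicting $J^2=-\id$. If anything, your version is slightly more carefully stated, since you work with the eigenbasis $b_k$ of $\hH$ rather than the vectors $e_k$ appearing in the paper's own write-up.
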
 
 \begin{proof} 
From $J^{-1} D J e_k = D e_k = ke_k$,   it follows that $D J e_k =k J e_k$, hence $J e_k = \alpha_k e_k$, where $\alpha_k\in \C$. 
Since $J$ is anti-unitary,  we have necessarily $|\alpha_k|=1$, and thus 
$$
J^2 e_n = J(\alpha_k e_k) = \bar\alpha_k J e_k = \bar\alpha_k \alpha_k e_k =e_k, 
$$ 
which contradicts the first equation of \eqref{J2}. 
 \end{proof} 

Ignoring for a moment the commutation relations \eqref{J2} and \eqref{J1}, it is also impossible to find an anti-unitary operator 
satisfying \eqref{J0}.  

\begin{proposition}  \label{JpiJ}
 Let $\A$ denote the *-algebra from Proposition \ref{P1} and 
 $\pi : \A \ra B(\hH)$  the representation  from Theorem \ref{TST}. 
 Then there does not exist a anti-unitary operator $J$ on $\hH$ satisfying 
 the first order condition \eqref{J0}. 
\end{proposition}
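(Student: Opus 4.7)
The plan is to exploit the block-diagonal structure of the representation together with the irreducibility of $\A$ on $\lN$, reducing the first commutator condition to a dimension count.

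First I would observe that $\K_S \subset \A$ contains every finite-rank matrix unit $|e_i\rangle\langle e_j|$, since such an operator has a single nonzero matrix entry and therefore trivially decays faster than any polynomial. Consequently $\K_S$, and hence $\A$, acts irreducibly on $\lN$, so by Schur's lemma the commutant of $\A$ in $B(\lN)$ reduces to the scalars, $\A' = \C\hs \id_{\lN}$.

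Next I would compute the commutant of $\pi(\A)$ in $B(\hH) = B(\lN \oplus \lN)$. Since $\pi(a) = \begin{pmatrix} a & 0 \\ 0 & a \end{pmatrix}$ is block-diagonal, a straightforward entry-wise commutator computation shows that $X = (X_{ij})_{i,j=1,2} \in B(\hH)$ commutes with every $\pi(a)$ if and only if each entry $X_{ij} \in \A' = \C\hs \id_{\lN}$. Therefore
\[
\pi(\A)' \,=\, M_2(\C)\ot \id_{\lN},
\wegengruen
a four-dimensional subalgebra of $B(\hH)$.

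Now suppose for contradiction that an anti-unitary $J$ on $\hH$ exists satisfying \eqref{J0}. The first commutator condition $[\pi(a), J\pi(b)J^{-1}] = 0$ for all $a,b \in \A$ says exactly that $J\pi(b)J^{-1} \in \pi(\A)'$ for every $b \in \A$. This yields a map $\Phi : \A \lra M_2(\C)\ot \id_{\lN}$, $\Phi(b) := J\pi(b)J^{-1}$, which is conjugate-linear and injective: indeed $J$ is a bijection of $\hH$ and $\pi$ is faithful, so $\Phi(b_1) = \Phi(b_2)$ forces $\pi(b_1) = \pi(b_2)$, hence $b_1 = b_2$. However, $\A$ is infinite-dimensional (already $\K_S$ is), while the target is 4-dimensional over $\C$, contradicting the injectivity of $\Phi$. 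Hence no such $J$ can exist.

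The only delicate point in this plan is the commutant computation — specifically, making sure the irreducibility of $\A$ on $\lN$ is genuinely available from the inclusion $\K_S \subset \A$, which as noted above is immediate. Once that is in hand, the dimensional obstruction is the essential mechanism and the contradiction is automatic.
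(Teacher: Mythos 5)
Your proposal is correct and follows essentially the same route as the paper: both compute the commutant of $\pi(\A)$ in $B(\lN\oplus\lN)$ to be the four-dimensional algebra $M_2(\C)\ot\id_{\lN}$ (the paper via density of $\K_S$ in $K(\lN)$, you via matrix units and Schur's lemma) and then derive the contradiction from the first order condition forcing the infinite-dimensional family $J\pi(b)J^{-1}$ into that commutant. Your explicit injective conjugate-linear map $\Phi$ is a slightly more careful phrasing of the paper's dimension count, but the mechanism is identical.
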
 
 \begin{proof} 
Recall that $\hH= \lN\oplus \lN$ and $\pi(a)(v_+\oplus v_-):= av_+\oplus av_-$. 
If an operator $A=  \begin{pmatrix} A_{11} & A_{12}\\  A_{21} & A_{22} \end{pmatrix}\in B(\lN \oplus \lN)$ 
commutes with  $\pi(a)$ for all  $a\in\A$, then we have necessarily 
$[k, A_{ij}]=0$ for all $k\in\K_S$ and $i,j=1,2$. Since $\K_S$ is dense in $K(\lN)$, it follows 
that $A_{ij} = c_{ij}\,\id$ with $c_{ij}\in\C$. 
 Therefore we get for any anti-unitary operator $J$ satisfying \eqref{J0} 
 $$
 \dim\{ A\in B(\hH): [ \pi(a), A] =0 \text{ for all }  a\in\A\} =4\geq  \dim\{ J\pi(b)J^{-1}:  b\in\A\} = \infty, 
 $$  
which is a contradiction. 
 \end{proof}  
 
If one wants to allow for the existence of a real structure despite the negative result of Proposition \ref{JpiJ}, 
one may consider to weaken the first order condition \eqref{J0} by requiring only the modified version \eqref{JK}.  
But then the problem of Proposition \ref{PJ} still persists. However, the problem was caused by 
taking the commutation relations \eqref{J2} of a  spectral triple of dimension 2.  
On the other hand, our spectral triples are $1^+$-summable, thus their metric dimension is 1 instead of 2. 
%Such dimension drop phenomena were observed before, for instance in  \cite{DS,MNW1,MNW2}.  
Under the requirements of Equations \eqref{J1} and \eqref{JK}, we can prove the following positive result. 

\begin{proposition}  \label{JpiJK}
Let $(\A, \hH, D,\g)$ denote the spectral triple given in Theorem \ref{TST}, 
and let $\{b_k:k\in\Z\}$  be the orthonormal basis defined in \eqref{bk}. 
Then the anti-unitary operator $J$ given by 
\[    \label{Jbk}
J b_k= b_{-k} , \qquad k\in\Z, 
\] 
satisfies the conditions \eqref{J1} and \eqref{JK}. 
\end{proposition}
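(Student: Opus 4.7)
The plan is to verify the algebraic relations \eqref{J1} directly on the eigenbasis $\{b_k:k\in\Z\}$ of $D$, and then to deduce the modified first order condition \eqref{JK} by computing $J$ explicitly on the standard basis of $\hH$ and combining this with the commutator estimates already established in Proposition~\ref{P1} and the proof of Theorem~\ref{TST}.

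The identities $J^2=\id$ and $JD=-DJ$ are immediate on the basis $\{b_k\}$: antilinearity gives $J^2 b_k= J b_{-k}= b_k$, and $DJb_k= D b_{-k}= -k b_{-k}= -\overline{k}\,J b_k = -JDb_k$ because $k\in\R$. For the first order condition, I next invert \eqref{bk} to read off $J(e_n\oplus 0)=-(e_n\oplus 0)$ and $J(0\oplus e_n)=0\oplus e_n$ for all $n\in\N_0$. In compact form, $J=\sigma K$, where $K$ is coordinatewise complex conjugation in the basis $\{e_n\}$ of $\lN$ and $\sigma=(-1)\oplus 1$ on $\hH=\lN\oplus\lN$. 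Since $\sigma$ commutes with every $\pi(b)=b\oplus b$, this yields $J\pi(b)J^{-1}=\bar b\oplus\bar b$, where $\bar b=K b K^{-1}$ denotes entry-wise complex conjugation of $b$ in the basis $\{e_n\}$. A short calculation on matrix elements using $(T_g)_{kn}=g_{k-n}$ shows that $\bar T_g = T_{\tilde g}$ with $\tilde g(\e^{\im t}):=\overline{g(\e^{-\im t})}$, which is again in $C^\infty(\bS)$ whenever $g$ is, though in general not in $C^\infty(\veem\bS)$; and for $A\in\K_S$, $\bar A$ is trivially still in $\K_S$.

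With these identifications in hand, both conditions of \eqref{JK} reduce to statements already available. By \eqref{Ks} and the commutativity of $\CS$, we have $[T_f,T_{\tilde g}]\in\K_S$ for $T_f,T_g\in\Cinf$; Proposition~\ref{P1} ensures that $\K_S$ is a two-sided ideal in $\A$, which absorbs every commutator involving a generator from $\K_S$. The Leibniz identity $[xy,z]=x[y,z]+[x,z]y$ extends the conclusion to arbitrary $a,b\in\A$, giving $[\pi(a),J\pi(b)J^{-1}]\in\K_S(\hH)$. For the second commutator, I would first note, using \eqref{DpA} and \eqref{SNT}, that modulo $\K_S$ the entries of $[D,\pi(a)]$ lie in the Toeplitz subalgebra $\{T_h:h\in C^\infty(\bS)\}$: $[D,\pi(T_f)]$ has off-diagonal entries $-\im T_{\bar u f'}$ and $-\im T_{uf'}$, $[D,\pi(A)]$ has entries in $\K_S$ for $A\in\K_S$, and the Leibniz rule propagates this Toeplitz-plus-$\K_S$ structure to all of $\A$. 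Commuting such entries with $\bar b\oplus\bar b$ then falls under the same case analysis as above, yielding $[[D,\pi(a)],J\pi(b)J^{-1}]\in\K_S(\hH)$.

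The main obstacle is bookkeeping rather than analysis: in the Leibniz expansion of $[D,\pi(a)]$ for $a$ a long product of generators, one must track every term to confirm that each entry is a Toeplitz operator with $C^\infty(\bS)$ symbol modulo $\K_S$, so that the commutativity-modulo-$\K_S$ argument actually applies in each summand. Once this structural statement is cleanly formulated, \eqref{JK} follows with no further analytic input beyond what is already in Proposition~\ref{P1} and Theorem~\ref{TST}.
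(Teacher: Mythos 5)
Your proof is correct and follows essentially the same route as the paper: verify \eqref{J1} on the eigenbasis, compute $J$ explicitly on the standard basis of $\lN\oplus\lN$ (your factorization $J=\sigma K$ is exactly the paper's formula $J(e_k\oplus e_n)=(-e_k)\oplus e_n$), identify $J\pi(T_g)J^{-1}=\pi(T_{\tilde g})$ with $\tilde g(z)=\overline{g(\bar z)}$, and reduce \eqref{JK} to the commutativity-modulo-$\K_S$ statement \eqref{Ks} together with the ideal property for the $\K_S$ generators. Your explicit Leibniz bookkeeping for products of generators is a harmless elaboration of a step the paper leaves implicit.
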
 
 \begin{proof} 
 Since $b_k$ is an eigenvector of $D$ corresponding to the eigenvalue $k\in\Z$, we have 
 $JDJ^{-1} b_k = -k b_k = -D b_k$ , thus $JD= -DJ$. 
 Obviously, $J^2=1$, so \eqref{J1} is satisfied. 
 
 If $b\in \K_S$, then $J\pi(b)J^{-1}\in\K_S(\hH)$ and therefore \eqref{JK} holds for all $a\in\A$ 
 since $\K_S(\hH)$ is an ideal in $B(\hH)$. Similarly, if $a\in \K_S$, then, as in the proof of Theorem \ref{TST}, 
 $Na, \,aN\in\K_S$. Therefore $[D,\pi(a)] \in\K_S(\hH)$, and since also $\pi(a) \in\K_S(\hH)$, the condition 
  \eqref{JK} is now fulfilled for all $b\in\A$. 
 
 It remains to show that \eqref{JK} holds  for $a,b\in \Cinf$. So, let $a=T_g$ and $b= T_f$, 
 where $g,f\in C^\infty(\veem \bS )$.  From \eqref{bk}  and  \eqref{Jbk}, we get 
 $J(e_k\oplus e_n) = (-e_k)\oplus e_n$ for all $k,n\in\N_0$. 
 Writing $T_f= \sum_{k\in\Z}f_k S^{\# k}$ as in \eqref{Tf} and setting 
 $e_n^+:= e_n \oplus 0$ and $e_n^-:= 0\oplus e_n$, we compute 
\[     \label{JTfJ}
 J \pi(T_f) J^{-1}  e_n^\pm    =  \msum{k\in\N_0}{} \bar f_{k-n} e_k^\pm = \sum_{k\in\Z}\bar f_k S^{\# k} e_n^\pm  = \pi(T_{\hat f}) e_n^\pm , 
 \]
where $\hat f\in C^\infty(\bS)$ is defined by $\hat f(z):= \overline{ f(\bar z)}$. 
Therefore, by \eqref{D},  \eqref{SNT} and \eqref{JTfJ}, 
$$
[\pi(T_g), J\pi(T_f)J^{-1}]  = \pi([T_g, T_{\hat f}]),  \ \  
[[D,\pi(T_g)], J\pi(T_f)J^{-1}] =  -\im  \begin{pmatrix} 0 & \!\!\!\![ T_{\bar u g'},T_{\hat f}]   \\  [ T_{ u g'},T_{\hat f}]    & 0 \end{pmatrix} . 
$$
Since $g, \hat f,  ug', \bar u g' \in C^\infty(\bS)$, 
and hence its Fourier coefficients are sequences of rapid decay, 
we conclude as in the paragraph with Equation \eqref{Ks} 
that the commutators $[T_g, T_{\hat f}]$, $[ T_{\bar u g'},T_{\hat f}]$ and $[ T_{ u g'},T_{\hat f}]$ belong to $\K_S$. 
From this, the result follows. 
 \end{proof} 

\subsection{Finiteness}  

We say that a spectral triple $(\A,\hH,D)$ satisfies the finiteness condition, if there exists a dense subset 
of ``smooth'' vectors $\hH_\infty \subset \cap_{n\in\N} \,\dom(D^n)$ which is isomorphic to a finitely generated 
left $\A$-projective module. This finitely generated projective module is considered as a module of smooth sections 
of the vector bundle on which the Dirac operator acts (e.g.\ spin bundle). 
If $\hH_\infty$ is a core for the self-adjoint operator $D$, then the Dirac operator $D$ is uniquely determined by its 
restriction to $\hH_\infty$. 

Stable isomorphism classes of finitely generated projective modules of a (pre-)C*-al\-ge\-bra 
are classified by the associated $K_0$-group.  
Recall from Section \ref{sec-1}  that a non-trivial generator of $K_0(\CTq)$ 
is given by the 1-dimensional projection $p_{e_0} = 1-SS^*\in \K_S\subset \A$. 
Consider now the finitely generated left $\A$-projective module $\A_\infty := \A(1-SS^*) \subset B(\lN)$ 
together with the vector state and the non-negative sesquilinear form 
\[    \label{state} 
\psi_0(a) := \ip{e_0}{a e_0}, \quad \ipp{b}{a} := \psi_0(b^*a) = \ip{be_0}{a e_0} \quad a,b \in  \A(1-SS^*) . 
\]
The next proposition shows that $\cap_{n\in\N} \,\dom(D^n) \cong  \A(1-SS^*) \oplus  \A(1-SS^*) $ 
satisfying thus the finiteness condition. We will give the proof for $\CTq$ but the statement holds also for 
$\A\subset \CPq$. 

\begin{proposition}
Consider the finitely generated left $\A$-module $\A(1-SS^*)$ together with the sesquilinear form 
given in \eqref{state}. Then the left $\A$-modules $\hH_\infty := \cap_{n\in\N} \,\dom(D^n)$ 
and $ \A(1-SS^*) \oplus  \A(1-SS^*)$ 
are isometrically isomorphic. 
\end{proposition}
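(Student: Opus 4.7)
My plan is to identify both sides of the claimed isomorphism explicitly as the same concrete space---the rapid-decay subspace of $\lN\oplus\lN$---and then read off the isometric $\A$-module isomorphism from that common description.

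First I would describe $\hH_\infty$. Because $D^2=|D|^2=\mathrm{diag}((N+1)^2,N^2)$ is diagonal on $\lN\oplus\lN$, one has $\dom(D^n)=\dom(|D|^n)=\dom(N^n)\oplus\dom(N^n)$ for every $n\in\N$; intersecting over $n$ gives $\hH_\infty=\cS\oplus\cS$, where $\cS\subset\lN$ denotes the subspace of rapidly decaying sequences, i.e.\ the smooth vectors of $N$. The regularity established in Theorem~\ref{TST} guarantees $\pi(a)\hH_\infty\subset\hH_\infty$, so $\hH_\infty$ is genuinely a left $\A$-module.

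Next I would analyze one summand on the algebraic side. Observing that $1-SS^*=p_{e_0}$ is the rank-one projection onto $\C e_0$, the operator $a(1-SS^*)$ has first column $ae_0$ and all remaining columns zero, so evaluation at $e_0$ defines an injective left-$\A$-linear map
\begin{equation*}
\phi:\A(1-SS^*)\lra\lN,\qquad a(1-SS^*)\longmapsto ae_0,
\end{equation*}
which by the very definition of $\ipp{\cdot}{\cdot}$ in \eqref{state} is an isometry onto its image with respect to the $\lN$-inner product. The heart of the argument is to show $\mathrm{im}(\phi)=\cS$. For the inclusion $\mathrm{im}(\phi)\subset\cS$ it suffices, by Leibniz, to consider the generators of $\A$: if $A\in\K_S$ then the first column of $A$ is rapidly decaying by definition of $\K_S$; if $T_f\in\Cinf$ then $T_fe_0=\sum_{k\geq 0}f_ke_k$ has rapidly decaying coefficients because $f\in C^\infty(\bS)$. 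For the reverse inclusion, given $\xi=\sum_k\xi_ke_k\in\cS$ I take $A_\xi$ to be the rank-one operator with first column $\xi$ and zero elsewhere; then $\phi(A_\xi(1-SS^*))=\xi$, and $A_\xi\in\K_S\subset\A$ because the joint rapid-decay condition $\lim_{(k,n)\to\infty}k^\alpha n^\beta|(A_\xi)_{kn}|=0$ becomes vacuous for $n\geq 1$ (where the entries vanish) and reduces to the rapid decay of $(\xi_k)_k$ for $n=0$.

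Taking two copies of the isomorphism $\phi\colon\A(1-SS^*)\cong\cS$ delivers the asserted isometric $\A$-module isomorphism $\A(1-SS^*)\oplus\A(1-SS^*)\cong\cS\oplus\cS=\hH_\infty$. The only point requiring any care is the surjectivity step, where one must check that the two-index limit defining $\K_S$ is indeed satisfied by a matrix supported on a single column; this is the minor technical verification outlined above, and once it is in place the rest of the proof is essentially bookkeeping about the identification $1-SS^*=p_{e_0}$.
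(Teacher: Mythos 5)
Your proposal is correct and follows essentially the same route as the paper: the same identification $\hH_\infty=\cap_n\dom(N^n)\oplus\cap_n\dom(N^n)$ with rapid-decay sequences, the same evaluation map $a(1-SS^*)\mapsto ae_0$, and the same surjectivity argument via a single-column rapid-decay matrix. The only cosmetic difference is that the paper verifies $\mathrm{im}(\Phi)\subset\cS(\N)$ via the decomposition $a=(a-T_{\s(a)})+T_{\s(a)}\in\K_S+\Cinf$ rather than by reducing to generators, but both verifications are sound.
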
 
\begin{proof}
Since $\cap_{n\in\N} \,\dom(D^n) = \cap_{n\in\N} \,\dom(N^n) \oplus \cap_{n\in\N} \,\dom(N^n) $ with the diagonal $\A$-action, 
it suffices to show that $\A(1-SS^*) \cong  \cap_{n\in\N} \,\dom(N^n)$. Note that 
\begin{align} \nonumber 
\underset{n\in\N}{\cap} \,\dom(N^n)  &= \{ \msum{k\in\N_0}{} \a_k \hs \e_k\in\lN :  \msum{n\in\N_0}{} |\a_k|^2  k^{2n} <\infty\} \\
&= \{ \msum{k\in\N_0}{} \a_k \hs \e_k\in\lN :  (\a_k)_{k\in\N_0}  \in \cS(\N)  \}         \label{S}
\end{align} 
where $ \cS(\N) $ denotes the space of sequences of rapid decay. We claim that 
\[  \label{Phi}
\Phi :  \A(1-SS^*) \subset B(\lN)\, \lra\, \underset{n\in\N}{\cap} \,\dom(N^n)  , \qquad \Phi(x):= x\hs e_0 , 
\] 
defines an isometric isomorphism. 

To see that $\Phi$ is well defined, suppose that $T_f \in \Cinf$, 
where $f\in C^\infty(\veem \bS )$, and write $f= \sum_{k\in \Z} f_k u^k$ in its Fourier series expansion. 
From Equation \eqref{Tf}, we get  $\Phi(T_f(1-SS^*)) = \sum_{k\in \N_0} f_k\hs e_k$. Since $f\in C^\infty(\bS)$, the sequence 
$(f_k)_{n\in\N_0}$ belongs to $\cS(\N)$ (cf.\ Section \ref{STR}). Therefore, by \eqref{S}, 
$\Phi(T_f(1-SS^*))  \in \cap_{n\in\N} \,\dom(N^n)$. If $a\in \K_S$ and $(a_{kj})_{k,j\in\N_0}$ denotes 
the corresponding matrix of rapid decay, then $(a_{k0})_{k\in\N_0}\in \cS(\N)$ and therefore 
 $\Phi(a(1-SS^*)) = \sum_{k\in \N_0} \! a_{k0}\hs e_k\in \cap_{n\in\N} \,\dom(N^n)$. 
Since each element from $a\in \A$ can be written  as $a = (a- T_{\s(a)}) + T_{\s(a)} \in \K_S+ \Cinf$
it follows that  $\Phi(\A(1-SS^*)) \subset  \cap_{n\in\N} \,\dom(N^n)$.  
  
Clearly, $\Phi$ is left $\A$-linear since $\A\subset B(\lN)$. To show the surjectivity of $\Phi$, 
let $v=\sum_{k\in\N_0} \a_k \hs \e_k\in  \cap_{n\in\N} \,\dom(N^n)$. Then $a := (\a_k \delta_{0,j})_{k,j\in\N_0}\in\K_S$, 
where  $\delta_{i,j}$ denotes the Kronecker delta, and $\Phi(a(1-SS^*)) = v$. 
Moreover, $\Phi$ is injective. To see this, assume that $\Phi(a(1-SS^*)) = 0$. 
Then $a(1-SS^*)e_0= \Phi(a(1-SS^*)) = 0$ and $a(1-SS^*)e_k=0$ for all $k\neq 0$ 
since $1-SS^*$ is the orthogonal  projection onto $\C e_0$. Thus  $a(1-SS^*)=0$ in $\A(1-SS^*) \subset B(\lN)$. 
So we just proved that the map $\Phi$ in \eqref{Phi}  defines an isomor\-phism of left $\A$-modules. 

It remains to show that $\Phi$ is actually an isometry. From \eqref{state} and \eqref{Phi},  we get 
$$
\ip{ \Phi(a(1-SS^*)) }{ \Phi(a(1-SS^*)) } =\ip{ae_0}{a e_0} = \ipp{a}{a}
$$
for all $a\in\A$. 
This completes the proof. 
\end{proof} 

Applying the previous proposition to the noncommutative torus $C(\mathbb{T}_{1,q})$, we see that 
the ``spin bundle'' is given by $ \A(1-SS^*) \oplus  \A(1-SS^*) $ rather than $\A \oplus  \A$ as it should have been in
analogy to the classical case.  Moreover, by  extending the isomorphism $\Phi$ in \eqref{Phi} to its closure, the Hilbert space 
 $ \overline{\A(1-SS^*)} \cong \lN \subset L_2(\bS)$ looks rather like functions on the circle $\mathbb{S}$ than on the disc $\D$. 
 This observation is in line with the metric dimension 1. It seem that the interior of the quantum disc 
 $K(\lN)= \ker(\s)$ has the dimension of a (fuzzy) point. Also Equation \eqref{SNT} indicates that the action 
 of the Dirac operator is essentially given by a derivation on the circle.

\subsection{Existence of a volume form or orientation}  

By a volume form we mean a Hochschild $n$-cycle~$\omega$, i.e., 
\begin{align*}
\omega&=\msum{j}{} a_{0j} \ot b_j \ot a_{1j} \ot \cdots \ot a_{nj} \in \A\ot\A^{\mathrm{op}}\ot\A\ot \cdots \ot \A, \\
0&=\delta_n(\omega):=  \msum{j}{} a_{0j} \ot (b_j a_{1j}) \ot a_{2j}\ot \cdots \ot a_{nj} \\
&+ \msum{k=1}{n-1} \msum{j}{}(-1)^k a_{0j} \ot b_j\ot \cdots \ot a_{kj} a_{k+1,j}  \ot \cdots \ot a_{nj} 
\pm \msum{j}{} a_{nj} a_{0j} \ot b_j \ot  \cdots \ot a_{n-1,j} , 
\end{align*} 
satisfying 
 \[      \label{vol} 
 \g = \pi_D(\omega) := \mbox{$\sum_j$}\, a_{0j} J b_j^*J^{-1} [D, a_{1j}] \cdots [D, a_{nj}] , 
 \] 
 where $n$ depends on the (metric) dimension of the spectral triple. 
 
 As pointed out in the paragraph preceding Proposition \ref{JpiJK}, the metric dimension of our spectral triples is 1. 
 However, for a Hochschild $1$-cycle~$\omega$, the expression $\pi_D(\omega)$ in \eqref{vol} is an odd operator whereas  
 $\g$ is a diagonal operator, so Equation \eqref{vol} cannot hold. Assuming that our quantum surfaces have dimension 2, 
 we face the problem of the non-existence of the real structure $J$, see Proposition \ref{PJ}. 
 Unfortunately the problem goes deeper and cannot be resolved in any other way. 
 
 \begin{proposition}   \label{O}
 For all spectral triples from Section \ref{STR} and any anti-unitary operator $J$, there does not exist a Hochschild $n$-cycle 
 satisfying \eqref{vol}. 
  \end{proposition}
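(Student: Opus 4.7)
The plan is to assume, toward contradiction, that there exist a Hochschild $n$-cycle $\omega$ and an anti-unitary $J$ with $\pi_D(\omega) = \gamma$, and to derive a contradiction by combining a parity analysis with the image of $\pi_D(\omega)$ in the Calkin algebra.

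The first step passes to the Calkin algebra $Q(\hH) := B(\hH)/\K(\hH)$ via the quotient $q:B(\hH)\ra Q(\hH)$, exploiting that the Calkin images of the building blocks of $\pi_D(\omega)$ lie in the canonical copy of $M_2(C(\bS))$ inside $Q(\hH)$. Explicitly, for $T_f\in \Cinf$, the formulas \eqref{Tf} and \eqref{SNT} yield $q(\pi(T_f))=f\cdot I_2$ and $q([D,\pi(T_f)]) = -\im f'\hs\Gamma$, with
\[\Gamma := \begin{pmatrix} 0 & \bar u \\ u & 0\end{pmatrix}\in M_2(C(\bS)),\]
while both images vanish on $\K_S\subset\A$. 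Since $\Gamma^2 = I_2$, the product $q([D,\pi(a_{1j})])\cdots q([D,\pi(a_{nj})])$ is a scalar element of $C(\bS)\cdot I_2$ when $n$ is even and of $C(\bS)\cdot\Gamma$ when $n$ is odd; either way, this product commutes with $\Gamma$. A direct calculation gives the crucial anti-commutation $q(\gamma)\hs\Gamma + \Gamma \hs q(\gamma)=0$, so $q(\gamma)$ anti-commutes with $\Gamma$ although it commutes with $I_2$.

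The second step brings in parity. Writing $K_j := J\pi(b_j^*)J^{-1}$ and decomposing $K_j = K_j^+ + K_j^-$ with $\gamma K_j^\pm\gamma = \pm K_j^\pm$, the identity $\gamma\hs\pi_D(\omega)\hs\gamma = \gamma^3 = \gamma$ (which follows from $\pi_D(\omega) = \gamma$) combined with $[\gamma,\pi(a)]=0$ and $\{\gamma,[D,\pi(a)]\}=0$ forces precisely one parity part of each $K_j$ to contribute to $\pi_D(\omega)$: only $K_j^+$ when $n$ is even, only $K_j^-$ when $n$ is odd, so that the resulting operator matches the even grading of $\gamma$. Combined with the Calkin computation of the first step, the goal is then to show that the surviving terms force $q(\pi_D(\omega))$ to commute with $\Gamma$ modulo compacts, in direct conflict with $q(\pi_D(\omega)) = q(\gamma)$ and $\{q(\gamma),\Gamma\}=0$.

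The principal obstacle is that for a completely arbitrary anti-unitary $J$ the image $q(K_j^\pm)$ need not lie in $M_2(C(\bS))\subset Q(\hH)$, so its commutation with $\Gamma$ modulo compacts is not automatic. Overcoming this requires using the Hochschild cycle condition $\delta_n(\omega) = 0$ together with the self-adjointness $\pi_D(\omega)^* = \gamma$ to push the $\Gamma$-commutation from the $\pi(\A)$- and $[D,\pi(\A)]$-factors through the inserted $J$-conjugates. This is the technical heart of the argument, and it is precisely the absence of any a priori compatibility of $J$ with $\gamma$ or $D$ that makes Proposition \ref{O} delicate.
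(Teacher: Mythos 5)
Your first step is sound and coincides with the paper's starting point: both pass to the Calkin quotient $q:B(\hH)\ra B(\hH)/\K(\hH)$, use that $q(\pi(T_f))=f\hs I_2$ and, by \eqref{SNT}, $q([D,\pi(T_f)])=-\im f'\hs\Gamma$ with $\Gamma=\left(\begin{smallmatrix}0&\bar u\\ u&0\end{smallmatrix}\right)$, that both images vanish on $\K_S$, and that $\Gamma^2=I_2$. The parity remark for odd $n$ and the decomposition of $K_j=J\pi(b_j^*)J^{-1}$ into $\g$-homogeneous parts are also correct. But the proposal does not prove the proposition: the decisive step --- showing that $q(\pi_D(\omega))$ commutes with $\Gamma$ despite the inserted factors $J\pi(b_j^*)J^{-1}$ --- is only announced (``overcoming this requires using the Hochschild cycle condition together with self-adjointness\dots'') and never carried out. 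For an arbitrary anti-unitary $J$, $q(K_j^\pm)$ is just some element of $M_2\bigl(B(\lN)/\K(\lN)\bigr)$ with no reason to commute with $\Gamma$, and neither $\delta_n(\omega)=0$ (a condition on the tensor $\omega$, not on $J$) nor $\pi_D(\omega)^*=\g$ visibly supplies that commutation. As written this is a genuine gap, not a deferred technicality, and it is not clear the proposed mechanism can close it.

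The paper's proof avoids this obstacle by extracting a weaker, purely diagonal contradiction. For $n$ odd it simply observes that the right-hand side of \eqref{vol} is an odd operator while $\g$ is even. For $n=2k$ it groups the commutators into consecutive pairs: since $u\bar u=1$, each pair $[D,\pi(a_{2i,j})][D,\pi(a_{2i+1,j})]$ has symbol $-\hs\s(a_{2i,j})'\hs\s(a_{2i+1,j})'\hs I_2$, a scalar multiple of the identity matrix, so the entire product of $2k$ commutators contributes the same scalar $(-1)^k\prod_i\s(a_{ij})'$ to both diagonal slots; equating the two diagonal entries of \eqref{vol} then gives $1=-1$, because the diagonal entries of the right-hand side coincide while those of $\g$ are $1$ and $-1$. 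This only requires comparing the two diagonal entries after the pairing, not commutation of the full $q(\pi_D(\omega))$ with $\Gamma$, which is a strictly stronger property; in particular the factor $\hat\s(a_{0j}Jb_j^*J^{-1})$ enters both diagonal entries through one and the same expression and need not be controlled against $\Gamma$. If you want to salvage your route, replace the target ``$q(\pi_D(\omega))$ commutes with $\Gamma$'' by the paper's diagonal-entry comparison; the anticommutation $\{q(\g),\Gamma\}=0$ is then not needed at all.
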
 
 \begin{proof} 
 For $n$ odd, the same reasoning as in the case $n=1$ applies: Since $D$ is odd, the right hand side of \eqref{vol} would yield an odd 
 operator whereas $\g$ is even, a contradiction. 
 
 Assume now that there exist an anti-unitary operator $J$ and a Hochschild $2k$-cycle 
 $\omega=\sum_{j}a_{0j} \ot b_j \ot a_{1j} \ot \cdots \ot a_{2k,j}$ satisfying \eqref{vol}.  
 Let $\hat \s : B(\lN) \ra B(\lN) / K(\lN) $ denote the cannonical projection and observe that the restriction of $\hat \s$ to the Toeplitz algebra yields the symbol map. Recall that $Nk, kN\in\K_S$ for all $k\in\K_S$. 
 From \eqref{SNT}, we get for all $a\in\A$ 
 $$
 \hat\s ([S^*N, a] ) = \hat\s ([S^*N, a - T_{\s(a)} + T_{\s(a)}  ] )  =  \hat\s ([S^*N, T_{\s(a)}  ] ) 
 = \hat\s ( -\im T_{\bar u \s(a)'}) = -\im \bar u \s(a)', 
 $$
 and similarly $\hat\s ([NS, a] ) =  -\im u \s(a)'$.  
 Using the facts that 
 the operators $\s(t)$, $t\in\T$, commute and that
 $u\bar u=1$, and applying $\hat \s$ to two consecutive commutators in \eqref{vol},  we obtain 
 \begin{align*}
 \hat\s( [D, a_{2i,j}]  [D, a_{2i+1,j}]) & = 
 \begin{pmatrix}   \hat\s([S^* N, a_{2i,j}] [NS, a_{2i+1,j}] )& 0  \\ 
 0 &\!\!\! \hat\s([NS, a_{2i,j}] [S^*N, a_{2i+1,j}]) \end{pmatrix}\\
 &=  \begin{pmatrix}   - \s(a_{2i,j})'  \s(a_{2i+1,j})'  & 0\\ 0 &  - \s(a_{2i,j})'  \s(a_{2i+1,j})'   \end{pmatrix}
 \end{align*}
 Note that the diagonal entries coincide. This remains true for the right hand side of \eqref{vol} 
 if $n$ is even. On the other hand, the diagonal entries of $\g$ differ by a minus sign. 
 Therefore applying $\hat \s$ to the diagonal elements of \eqref{vol} and equating gives 
$$
1= \msum{j}{}  (-1)^k \hat\s( a_{0j} J b_j^*J^{-1}) \s(a_{1,j})'\s(a_{2,j})'\cdots \s(a_{2k-1,j})' \s(a_{2k,j})' =-1,   
$$
 which is a contradiction.  
 \end{proof} 
 
 Observe that the problem of Proposition \ref{O} cannot be solved by changing $\g$ as in \cite{W} as long as 
 the diagonal entries of $\g$ do not coincide.

  \subsection{Poincar\'e duality}  
  
We mentioned already in Section \ref{STR} that regular spectral triples give rise to index pairings. 
 To be more precise, one can show that, for an even regular spectral triple $(\A, \hH,D,\g)$ 
 and a projection $P=P^2=P^*\in\Mat_{n\times n}(\A) $, 
 the map $\pi(P)D_{+-}\pi(P) : \pi(P)\hH^n_- \ra  \pi(P)\hH^n_+$ yields a 
 Fredholm operator and its index does not depend on the $K_0$-class of $P$. 
 Here, $D_{+-}$ denotes the upper right corner of the odd operator $D$. 
 Unfortunately, the formulation of Poincar\'e duality involves a real structure $J$. 
 One says that an even spectral triple with real structure $J$ satisfies Poincar\'e duality~\cite{C0}, if 
\[    \label{i} 
K_0(\A)\times K_0(\A)\, \ni \,([P],[Q])\, \longmapsto\, \ind ((\pi(P)\ot J\pi(Q)J^{-1} )D_{+-} (\pi(P)\ot J\pi(Q)J^{-1} )) \,\in\, \Z
\] 
defines a non-degenerate pairing, where $P\in\Mat_{n\times n}(\A) $ and $Q\in \Mat_{k\times k}(\A) $ are projections
and the operator in \eqref{i} acts between the Hilbert spaces 
$(\pi(P)\ot J\pi(Q)J^{-1} )\hH_-^{nk}$ and $(\pi(P)\ot J\pi(Q)J^{-1} )\hH_+^{nk}$. 
 
The aim of this section is to explain that the Poincar\'e duality fails for our spectral triple. 
The statement doesn't seem to make much sense because of the non-existence of a real structure 
proven in Section \ref{rs}. However, the main problem is not the real structure. 
The next proposition will show that the fundamental class of the Dirac operator only detects the 
rank of the trivial $K_0$-classes and leads to a zero pairing with $K_0$-classes represented by 
compact operators. Let us recall here from \cite{W0} (see also the end Section~\ref{sec-1}) 
that $ K_0(\CTq) \cong \Z\oplus \Z \cong K_0(K(\lN)) \oplus K_0(\C)$.  
\begin{proposition}
Let $(\A,\hH,D,\g)$ denote the spectral triple from Theorem \ref{TST}. 
For any odd or even anti-unitary operator $J$ on $\hH= \lN\oplus\lN$, 
the index pairing \eqref{i} (if well defined) is degenerate. 
\end{proposition}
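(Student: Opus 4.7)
The plan is to show that the class $[p_{e_0}]\in K_0(\CTq)$ sits in the left radical of~\eqref{i}, which gives the claimed degeneracy. Since by the end of Section~\ref{sec-1} we have $K_0(\CTq)\cong \Z[1]\oplus\Z[p_{e_0}]$, bilinearity will reduce the problem to showing that both
$\langle [F],[p_{e_0}]\otimes[1^{\mathrm{op}}]\rangle$ and
$\langle [F],[p_{e_0}]\otimes[p_{e_0}^{\mathrm{op}}]\rangle$ vanish; combined with $\langle[F],[1]\otimes[1^{\mathrm{op}}]\rangle=\ind(D_{+-})=1$ from Theorem~\ref{TST}, this will force the $2\times 2$ pairing matrix (in the basis $\{[1],[p_{e_0}]\}$) to have zero determinant.

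The first pairing is immediate: with $Q=1$ the factor $J\pi(Q)J^{-1}$ is the identity, so the compressed Fredholm operator is $\pi(p_{e_0})D_{+-}\pi(p_{e_0})$, viewed between the one-dimensional spaces $\pi(p_{e_0})\hH_\pm=\C\hs e_0^\pm$. From $D_{+-}=S^*N$ in~\eqref{D} and $Ne_0=0$ in~\eqref{N}, this map is zero, so its Fredholm index is $1-1=0$.

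For the second pairing I would analyze $J\pi(p_{e_0})J^{-1}$ using that, for~\eqref{i} to be well defined as a genuine Fredholm index, $J$ must satisfy $JD=\pm DJ$ as in~\eqref{J1}--\eqref{J2}; together with $J^2=\id$ (the only normalization available by Proposition~\ref{PJ}) and the simplicity of the $D$-spectrum from Theorem~\ref{TST}, this forces $Jb_k=\phi_k\hs b_{\pm k}$ with $|\phi_k|=1$ on the eigenbasis of~\eqref{bk}. The hypothesis that $J$ is odd or even ($J\g=\pm\g J$), combined with $J^2=\id$, will further force the symmetry $\phi_{-k}=\phi_k$. Substituting $e_0^+=\tfrac{1}{\sqrt 2}(b_1-b_{-1})$ and $e_0^-=b_0$ (read off from~\eqref{bk}) and using this phase symmetry yields $Je_0^\pm\in\C\hs e_0^\pm$, so that $J\pi(p_{e_0})J^{-1}=\pi(p_{e_0})$. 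The second pairing thus reduces to the first and again vanishes.

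The main obstacle is the middle step: one must handle the ``$+$'' and ``$-$'' cases of $JD=\pm DJ$ in parallel, track the phases $\phi_k$ through $J^2=\id$ and the $\g$-homogeneity to extract the constraint $\phi_{-k}=\phi_k$, and then verify $Je_0^\pm\in\C\hs e_0^\pm$ by an explicit expansion in the $\{b_k\}$ basis. A minor additional point is the extension from scalar to matrix-valued projections, which follows from the bilinearity of~\eqref{i} and the fact that $\pi(p_{e_0})$ remains of finite rank after any matrix amplification, so that no infinite-dimensional subtlety intervenes.
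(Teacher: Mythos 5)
Your overall strategy---exhibiting $[p_{e_0}]$ in the radical of the pairing \eqref{i}---is the same as the paper's, and your computation of the pairing of $[p_{e_0}]$ with $[1]$ already contains the decisive mechanism: the compression by $\pi(p_{e_0})$ lands in finite-dimensional spaces of equal dimension, so the index is $0$ (you do not even need to observe that the compressed map itself vanishes). The genuine gap is in your treatment of the pairing of $[p_{e_0}]$ with $[p_{e_0}]$. There you assume $JD=\pm DJ$, claiming this is forced by well-definedness of \eqref{i}; but the proposition quantifies over \emph{every} odd or even anti-unitary $J$, and well-definedness of the index pairing is governed by commutant and first-order conditions modulo compacts, not by any commutation relation between $J$ and $D$. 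Consequently your phase analysis ($Jb_k=\phi_k\hs b_{\pm k}$, the symmetry $\phi_{-k}=\phi_k$, and the conclusion $J\pi(p_{e_0})J^{-1}=\pi(p_{e_0})$) covers only a proper subclass of the admissible operators $J$; for a general even or odd anti-unitary $J$ the projection $J\pi(p_{e_0})J^{-1}$ need not equal $\pi(p_{e_0})$, and the reduction to your first case breaks down. A secondary inaccuracy: Proposition \ref{PJ} only excludes the particular sign combination \eqref{J2}, so it does not by itself force $J^2=\id$.

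The repair is to apply your step-two mechanism uniformly, which is exactly what the paper does: for \emph{any} projection $Q$ over $\A$ and \emph{any} bounded $J$, the operator $p_{e_0}\ot J\pi(Q)J^{-1}$ has finite rank, simply because $p_{e_0}$ does. Hence the compressed operator $(p_{e_0}\ot J\pi(Q)J^{-1})\hs S^*N\hs (p_{e_0}\ot J\pi(Q)J^{-1})$ acts between finite-dimensional spaces and the resulting index is $0$. This disposes of all classes in the second slot at once, requires no information about $J$ beyond boundedness, and makes both your decomposition of the second argument into the generators $[1]$ and $[p_{e_0}]$ and the entire analysis of the eigenbasis action of $J$ unnecessary. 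The root cause, as the paper emphasizes afterwards, is that $K(\lN)\subset\CTq$ acts on $\hH_-=\lN$ tautologically, so $\dim(p_{e_0}\hH_-)=1$ and the Hilbert space is simply too small to detect the compact generator of $K_0$.
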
 
\begin{proof}
Let $p_{e_0}$ denote the projection 1-di\-men\-sional projection onto $\C\hs e_0\subset \hH_-=\lN$. 
Then $[p_{e_0}] \in K_0(\CTq)$,  and for any projection $Q := \pi(Q_0)$ with $[Q_0]\in K_0(\CTq)$, we have 
$$
\dim((p_{e_0} \ot J QJ^{-1} )\hH_-^n) ) = \dim (J QJ^{-1} ( (p_{e_0}\lN) \ot \C^n)) \leq \dim (\C\ot\C^n) =n<\infty . 
$$
Hence the operator 
$$
(p_{e_0} \ot JQJ^{-1} ) S^*N (p_{e_0} \ot JQJ^{-1} )  : (p_{e_0} \ot JQJ^{-1} )\lN^n \ra (p_{e_0} \ot JQJ^{-1} )\lN^n 
$$
acts between finite dimensional spaces and therefore its index is always 0. 
\end{proof}   
  
Note that the problem arises because the compact operators $K(\lN)\subset\CTq$ act on $\hH_-=\lN$ by the identity 
and thus $\dim(p_{e_0}\hH_-)=1<\infty$. 
So we arrive again at the conclusion that the Hilbert space $\hH=\lN\oplus \lN$ is ``too small'' for a non-trivial 
index pairing.

\subsection*{Acknowledgment} 
This work was supported by CIC-UMSNH and 
is part of the international project ``Quantum Dynamics'' supported by EU grant H2020-MSCA-RISE-2015-691246 
and co-financed by Polish Government grant 3542/H2020/2016/2 awarded for the years 2016-2019. 

%% **********Bibliography***********

%% Use the widest label as parameter.

%% Arrange the reference items in alphabetical order.

%% Reference items can be numbered or have labels of your choice, as below.

%% Abbreviations of journal names should follow Mathematical Reviews.

%% Only the title is italicized; boldface is not used.

%% Our software will add links to many articles; for this, enclosing volume numbers in { } is helpful

%% Do not give the issue number unless the issues are paginated separately.

\end{document}